\DeclareMathOperator{\con}{con}
\DeclareMathOperator{\End}{End}
\DeclareMathOperator{\FIC}{FIC}
\DeclareMathOperator{\occ}{occ}
\DeclareMathOperator{\Part}{Part}
\newtheorem*{theorem}{Theorem}
\newtheorem{lemma}{Lemma}
\theoremstyle{definition}
\newtheorem*{question}{Question}
\renewcommand*\subjclass[2][2010]{\def\@subjclass{#2}\@ifundefined{subjclassname@#1}{\ClassWarning{\@classname}{Unknown edition (#1) of Mathematics Subject Classification; using '2010'.}}{\@xp\let\@xp\subjclassname\csname subjclassname@#1\endcsname}}
\renewcommand{\subjclassname}{\textup{2010} Mathematics Subject Classification}
\begin{document}

\title[On the lattice of overcommutative monoid varieties]{On the lattice of overcommutative\\
monoid varieties}

\thanks{The work is supported by the Russian Foundation for Basic Research (grant 17-01-00551) and by the Ministry of Education and Science of the Russian Federation (project 1.6018.2017/8.9).}

\author{S.\,V.\,Gusev}

\address{Ural Federal University, Institute of Natural Sciences and Mathematics, Lenina 51, 620000 Ekaterinburg, Russia}

\email{sergey.gusb@gmail.com}

\date{}

\begin{abstract}
It is unknown so far, whether the lattice of all varieties of monoids satisfies some non-trivial identity. The objective of this note is to give a negative answer to this question. Namely, we prove that any finite lattice is a homomorphic image of some sublattice of the lattice of overcommutative varieties of monoids (i.e., varieties that contain the variety of all commutative monoids). This implies that the lattice of overcommutative varieties of monoids, and therefore, the lattice of all varieties of monoids does not satisfy any non-trivial identity.
\end{abstract}

\keywords{monoid, variety, lattice of varieties}

\subjclass{Primary 20M07, secondary 08B15}

\maketitle

We study the lattice of varieties of monoids, i.e., algebras with two operations, namely an associative binary operation and a 0-ary operation that fixes the neutral element. There are many articles devoted to varieties of monoids. But the most part of them deals with the identities of the monoids. At the same time, only a little information is known so far about the lattice of monoid varieties which we denote by \textbf{MON}. In the works~\cite{Head-68} and~\cite{Wismath-86}, respectively, the lattices of all commutative and all idempotent monoid varieties are described. In~\cite[Theorem~1]{Pollak-81}, an example of a monoid variety without covers in the lattice \textbf{MON} is discovered. In~\cite[Subsection~3.2]{Jackson-Lee-17+} two monoid varieties are exhibited such that the subvariety lattices of both the varieties are finite, while the subvariety lattice of their join is uncountably infinite and does not satisfy the ascending chain condition. Also, there are a few papers where a description of subvariety lattices of some concrete monoid varieties appeared as auxiliary results (see~\cite[Lemma~4.1]{Lee-14}, for instance). These results probably form a majority of all currently known results regarding the lattice \textbf{MON}. This is in sharp contrast with a large number of striking and deep results about the lattice of semigroup varieties obtained so far (see the survey~\cite{Shevrin-Vernikov-Volkov-09}).

Due to insufficient knowledge of the lattice of monoid varieties, many natural questions here remain open so far. In particular, it is unknown so far, whether this lattice satisfies some non-trivial identity. The objective of this note is to give a negative answer to this question. Note that the negative answer to the analogous question for the lattice of semigroup varieties was found in 1971 in two articles by Burris and Nelson~\cite{Burris-Nelson-71a,Burris-Nelson-71b}.

A variety of monoids is called \emph{overcommutative} if it contains the variety of all commutative monoids. Evidently, the class of all overcommutative varieties forms a sublattice in the lattice of all monoid varieties, and we denote this sublattice by \textbf{OC}. The main result of this article is the following

\begin{theorem}
The lattice $\mathbf{OC}$ of all overcommutative monoid varieties does not satisfy any non-trivial identity.
\end{theorem}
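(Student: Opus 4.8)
The plan is to deduce the Theorem from the assertion advertised in the abstract: that every finite lattice is a homomorphic image of some sublattice of $\mathbf{OC}$. Granting this, the Theorem follows quickly. Lattice identities are inherited by sublattices and by homomorphic images, so if $\mathbf{OC}$ satisfied a lattice identity $\varepsilon$, then $\varepsilon$ would hold in every homomorphic image of every sublattice of $\mathbf{OC}$, and hence in every finite lattice. But the class of all finite lattices generates the variety of all lattices, so any identity holding in all finite lattices is trivial; thus $\varepsilon$ would be trivial. The entire difficulty therefore lies in representing arbitrary finite lattices inside $\mathbf{OC}$, and the remaining steps are devoted to that.

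First I would pass to a combinatorial model of $\mathbf{OC}$ via fully invariant congruences. The variety $\mathbf{Com}$ of all commutative monoids is the largest monoid variety all of whose identities are \emph{balanced}, where $u\approx v$ is balanced when $\con(u)=\con(v)$ and $\occ_x(u)=\occ_x(v)$ for every letter $x$; let $\beta$ denote the corresponding fully invariant congruence on the free monoid $F$. Since $\mathbf{OC}$ is the principal filter of $\mathbf{MON}$ generated by $\mathbf{Com}$, it is anti-isomorphic to the lattice of fully invariant congruences $\rho$ on $F$ with $\rho\subseteq\beta$, a variety being sent to its set of identities. Each $\beta$-class $C$ consists of the finitely many rearrangements of a fixed multiset of letters, so every such $\rho$ induces on $C$ a partition $\rho|_{C}\in\Part(C)$; and because distinct $\beta$-classes cannot be amalgamated, the assignment $\rho\mapsto(\rho|_{C})_{C}$ is a lattice embedding of the dual of $\mathbf{OC}$ into the direct product of the finite partition lattices $\Part(C)$, with meets computed as intersections and joins as partition-joins, class by class. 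The image is cut out precisely by the constraints that the congruence property and closure under $\End F$ impose between different classes; working with the dual is harmless, as the property of satisfying no non-trivial identity is self-dual and the class of finite lattices is closed under duals.

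The heart of the argument is to exhibit, for every $m$, a sublattice of $\mathbf{OC}$ that maps onto the partition lattice $\Pi_m$ of an $m$-element set. The idea is to fix one distinguished $\beta$-class $C$ together with a family of words built from a small alphabet in which auxiliary \emph{marker} letters, regulated through their $\occ$-values, rigidify the action of $\End F$, so that the partitions of $C$ realizable by admissible $\rho$ form a sublattice projecting onto $\Pi_m$, while the substitutions relating $C$ to its neighbouring classes impose no further collapsing. Composing such a surjection with an embedding of an arbitrary finite lattice $L$ into a suitable finite partition lattice $\Pi_m$ (a classical fact), and pulling the image of $L$ back along the surjection, realizes $L$ as a homomorphic image of a sublattice of $\mathbf{OC}$, which is exactly what the reduction requires. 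I expect the main obstacle to be precisely this gadget: one must arrange the words so that full invariance under $\End F$, taken together with the congruence property, reproduces the \emph{entire} partition lattice on $C$ without forcing any unwanted identifications, and checking the absence of such extra collapses is where the delicate bookkeeping with $\con$ and $\occ$ will be concentrated.
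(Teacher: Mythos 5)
Your reduction is sound and matches the paper's: pass to fully invariant congruences contained in the ``balanced'' congruence, note that such a congruence restricts class-by-class to partitions of the (finite) balanced classes with meets and joins computed componentwise, and conclude that a non-trivial identity in $\mathbf{OC}$ would yield a non-trivial identity holding in all the finite lattices so obtained. (Two small remarks: the paper gets by with Sachs' elementary result that finite partition lattices satisfy no non-trivial identity, whereas your detour through ``every finite lattice embeds into a finite partition lattice'' invokes the Pudl\'ak--T\r{u}ma theorem, a much heavier tool than you need; and the self-duality point you raise is handled in the paper simply by passing to the dual identity.)

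The genuine gap is the one you yourself flag: the gadget is never constructed, and it is the entire mathematical content of the result. Concretely, the paper takes $A_n=\{x^{n-i}yx^i \mid 0\le i\le n\}$, which is precisely the balanced class of $x^ny$, and verifies two properties: (a) $A_n$ is an anti-chain for the relation $\mathbf u\le\mathbf v \Leftrightarrow \mathbf v=\mathbf a\xi(\mathbf u)\mathbf b$ with $\xi$ a \emph{semigroup} endomorphism (a length count forces $\mathbf a=\mathbf b=\lambda$ and $\xi$ letter-to-letter); and (b) deleting any non-empty set of letters from any two words of $A_n$ gives equal results ($x^n$, $y$, or $\lambda$). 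Given an arbitrary partition $\beta$ of $A_n$, one takes the variety defined by all $\mathbf u\approx\mathbf v$ with $(\mathbf u,\mathbf v)\in\beta$ and must show no \emph{further} pairs of $A_n$ get identified: by the deduction lemma any consequence with left side in $A_n$ has the form $\mathbf a\xi(\mathbf u')\mathbf b\approx\mathbf a\xi(\mathbf v')\mathbf b$ with $(\mathbf u',\mathbf v')\in\beta$; if $\xi$ sends some letter of $\mathbf u'$ to the empty word then (b) forces $\xi(\mathbf u')=\xi(\mathbf v')$ and the step is trivial, and otherwise $\xi$ restricts to a semigroup endomorphism, so (a) forces $\mathbf a=\mathbf b=\lambda$ and $\xi=\mathrm{id}$ on the relevant letters, whence the derived pair already lies in $\beta$. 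Property (b) applied to singletons also gives $\occ_x(\mathbf u)=\occ_x(\mathbf v)$, i.e.\ the identities are balanced and the variety is overcommutative. Without this explicit anti-chain and the two-case analysis against the deduction lemma, the claim that ``full invariance reproduces the entire partition lattice on $C$ without forcing any unwanted identifications'' is exactly the unproved assertion on which everything rests, so the proposal as written does not yet constitute a proof.
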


We note that the analog of this theorem for semigroup varieties follows from~\cite[Corollary 4.4]{Volkov-94}.

Note also that the fact that the lattice \textbf{MON} does not satisfy any non-trivial identity was proved recently by I.Mikhailova. More precisely, she has proved that the lattice of all periodic monoid varieties contains an anti-isomorphic copy of the partition lattice over a countably infinite set (a private communication). It is well known that any non-trivial identity is false in the lattice with the above mentioned property~\cite{Sachs-61}. It may be mentioned here that the lattice \textbf{OC} does not contain an anti-isomorphic copy of the partition lattice over a countably infinite set. This follows from two observations. First, the analogous claim is true for the lattice of overcommutative semigroup varieties~\cite[Corollary~2.4]{Volkov-94}. Second, it is easy to see that the lattice \textbf{OC} is a sublattice of the lattice of overcommutative semigroup varieties. In this connection, the following open question seems to be natural.

\begin{question}
Does the lattice \textbf{OC} contain an anti-isomorphic copy of the partition lattice over any finite set?
\end{question}

To prove the main result, we need some definitions, notation and auxiliary assertions. As usual, we denote the principal ideal of a lattice $L$ generated by an element $a\in L$ by $(a]_L$. The partition lattice over the set $X$ is denoted by $\Part(X)$. 

The following lemma describes principal ideals of partition lattices. 

\begin{lemma}[{\cite[Lemma 403(v)]{Gratzer-11}}]
\label{ideals}
Let $\alpha$ be the partition of a set $X$ into two classes $A$ and $B$. Then the map $\beta$ from $(\alpha]_{\Part(X)}$ to $\Part(A)\times\Part(B)$ given by the rule
$$
\beta\longmapsto(\beta|_A,\beta|_B)\text{ for any }\beta\in(\alpha]_{\Part(X)}
$$
is a lattice isomorphism.\qed
\end{lemma}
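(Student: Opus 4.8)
The plan is to verify directly that the restriction map $\Phi\colon\beta\mapsto(\beta|_A,\beta|_B)$ is an order isomorphism between the ideal $(\alpha]_{\Part(X)}$ and the product $\Part(A)\times\Part(B)$ taken with its componentwise order; since an order isomorphism between lattices automatically carries least upper bounds to least upper bounds and greatest lower bounds to greatest lower bounds, this is exactly what is needed to conclude that $\Phi$ is a lattice isomorphism. Throughout I use the standard convention that $\beta\le\gamma$ in $\Part(X)$ means that $\beta$ refines $\gamma$, so that the blocks of $\beta$ are obtained by subdividing the blocks of $\gamma$; under this convention $(\alpha]_{\Part(X)}$ consists precisely of the refinements of $\alpha$.

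First I would check that $\Phi$ is well defined, and that it is a bijection. If $\beta\in(\alpha]_{\Part(X)}$, then $\beta$ refines $\alpha$, and since $\alpha$ has exactly the two blocks $A$ and $B$, every block of $\beta$ is contained either in $A$ or in $B$. Hence the blocks of $\beta$ lying in $A$ form a genuine partition $\beta|_A$ of $A$, likewise $\beta|_B$ is a partition of $B$, and the block set of $\beta$ is the disjoint union of the block sets of $\beta|_A$ and $\beta|_B$. This last observation yields both injectivity and surjectivity at once: $\beta$ is recovered completely from the pair $(\beta|_A,\beta|_B)$, and conversely, given any $(\sigma,\tau)\in\Part(A)\times\Part(B)$, the partition of $X$ whose blocks are the blocks of $\sigma$ together with the blocks of $\tau$ refines $\alpha$ and is sent by $\Phi$ to $(\sigma,\tau)$.

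The core of the argument is to show that $\Phi$ and its inverse are both order preserving, i.e. that $\beta\le\gamma$ holds in $(\alpha]_{\Part(X)}$ if and only if $\beta|_A\le\gamma|_A$ and $\beta|_B\le\gamma|_B$. For the forward direction, assume $\beta\le\gamma$ and let $P$ be a block of $\beta|_A$; then $P\subseteq A$, and $P$ is contained in some block $Q$ of $\gamma$. Because $\gamma$ also refines $\alpha$, the block $Q$ lies entirely in $A$ or entirely in $B$; as $P$ is nonempty and $P\subseteq Q\cap A$, we must have $Q\subseteq A$, so $Q$ is a block of $\gamma|_A$ containing $P$. Thus $\beta|_A$ refines $\gamma|_A$, and symmetrically $\beta|_B$ refines $\gamma|_B$. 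For the converse, if $\beta|_A\le\gamma|_A$ and $\beta|_B\le\gamma|_B$, then every block of $\beta$ --- being a block of $\beta|_A$ or of $\beta|_B$ --- is contained in a block of $\gamma|_A$ or of $\gamma|_B$, hence in a block of $\gamma$, so $\beta$ refines $\gamma$.

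The step I expect to demand the most care is precisely this two-sided order preservation, and within it the point that a block $Q$ of $\gamma$ cannot straddle the two classes $A$ and $B$; this is exactly where one uses the hypothesis $\gamma\le\alpha$ rather than merely $\beta\le\alpha$. Once the displayed equivalence is in hand, $\Phi$ is an order isomorphism onto $\Part(A)\times\Part(B)$, and therefore a lattice isomorphism, which completes the proof.
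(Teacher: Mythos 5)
Your proof is correct, but there is nothing in the paper to compare it against: the paper does not prove this lemma at all, it simply imports it from Gr\"atzer's book (Lemma~403(v) there) and marks it with \qed. Your argument is the standard self-contained one and it is sound at every step: the restriction map is well defined and bijective because every block of a partition $\beta\le\alpha$ lies wholly inside $A$ or wholly inside $B$, so $\beta$ and the pair $(\beta|_A,\beta|_B)$ determine each other; the two-sided order preservation is verified correctly, and you rightly isolate the one place where the hypothesis $\gamma\le\alpha$ (as opposed to $\beta\le\alpha$) is actually needed, namely to rule out a block of $\gamma$ straddling $A$ and $B$; and the final appeal to the fact that a surjective order isomorphism between lattices preserves joins and meets is standard. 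One cosmetic remark: the paper's statement overloads the symbol $\beta$ as both the name of the map and its generic argument, and your introduction of a separate name $\Phi$ for the map quietly repairs this. What your approach buys over the paper's citation is self-containedness and an explicit view of where each hypothesis enters; what the citation buys is brevity and a pointer to the general result in Gr\"atzer, of which this two-block case is a special instance (the general statement decomposes $(\alpha]_{\Part(X)}$ as a direct product over all blocks of $\alpha$, and your argument would extend verbatim to that setting).
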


First, we fix the notation. The free semigroup and the free monoid over the same countably infinite alphabet are denoted by $F$ and $F^1$ respectively. Two parts of identities we connect by the symbol $\approx$, while the symbol $=$ denotes the equality relation on $F^1$. Elements of the monoid $F^1$ are called \emph{words}. The empty word is denoted by the symbol $\lambda$. Words, unlike letters, are written in bold. As usual, $\End(F)$ and $\End(F^1)$ denote the monoid of endomorphisms of the semigroup $F$ and monoid $F^1$, respectively. The following statement is the specialization for monoids of a well-known universal-algebraic fact.

\begin{lemma}
\label{deduction}
The identity $\mathbf u\approx \mathbf v$ holds in the variety of monoids given by an identity system $\Sigma$ if and only if there exists a sequence of words $\mathbf u=\mathbf w_0,\mathbf w_1,\dots,\mathbf w_n=\mathbf v$ such that, for any $i\in\{0,1,\dots,n-1\}$, there are words $\mathbf a_i,\mathbf b_i\in F^1$, an endomorphism $\xi_i\in\End(F^1)$ and an identity $\mathbf u_i\approx \mathbf v_i$ from the system $\Sigma$ such that either $\mathbf w_i=\mathbf a_i\xi_i(\mathbf u_i)\mathbf b_i$ and $\mathbf w_{i+1}=\mathbf a_i\xi_i(\mathbf v_i)\mathbf b_i$ or $\mathbf w_i=\mathbf a_i\xi_i(\mathbf v_i)\mathbf b_i$ and $\mathbf w_{i+1}=\mathbf a_i\xi_i(\mathbf u_i)\mathbf b_i$.\qed
\end{lemma}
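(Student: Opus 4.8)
The plan is to establish the two implications separately, recognizing this statement as the monoid specialization of Birkhoff's completeness theorem for equational logic. The easy direction is soundness, the ``if'' part. I would argue that each elementary transition from $\mathbf w_i$ to $\mathbf w_{i+1}$ preserves the truth of identities throughout the variety. Indeed, since the identity $\mathbf u_i\approx\mathbf v_i$ lies in $\Sigma$, it holds in the variety; identities are closed under substitution, so $\xi_i(\mathbf u_i)\approx\xi_i(\mathbf v_i)$ holds, and they are closed under multiplication by arbitrary words on both sides, so $\mathbf a_i\xi_i(\mathbf u_i)\mathbf b_i\approx\mathbf a_i\xi_i(\mathbf v_i)\mathbf b_i$ holds as well. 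Chaining these equalities along the sequence $\mathbf u=\mathbf w_0,\dots,\mathbf w_n=\mathbf v$ and using transitivity of the identity relation yields $\mathbf u\approx\mathbf v$.

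For the harder ``only if'' direction (completeness) I would introduce the binary relation $\theta$ on $F^1$ declaring $\mathbf w\mathbin\theta\mathbf w'$ precisely when there is a sequence of the described form connecting $\mathbf w$ to $\mathbf w'$. The first task is to verify that $\theta$ is a fully invariant congruence of $F^1$. Reflexivity uses the empty sequence, symmetry uses the built-in ``or vice versa'' clause that allows each step to be reversed, and transitivity follows by concatenating two sequences. Stability under multiplication holds because prepending a word $\mathbf c$ and appending a word $\mathbf d$ to each step amounts to replacing the flanking words $\mathbf a_i,\mathbf b_i$ by $\mathbf c\mathbf a_i$ and $\mathbf b_i\mathbf d$; full invariance holds because applying an endomorphism $\phi\in\End(F^1)$ to a step $\mathbf a_i\xi_i(\mathbf u_i)\mathbf b_i$ turns it into $\phi(\mathbf a_i)\,(\phi\circ\xi_i)(\mathbf u_i)\,\phi(\mathbf b_i)$, which is again a legitimate step since $\phi\circ\xi_i$ is an endomorphism of $F^1$.

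Next I would verify that the quotient monoid $F^1/\theta$ lies in the variety defined by $\Sigma$. Taking $\mathbf a_i=\mathbf b_i=\lambda$ shows $\xi(\mathbf u_i)\mathbin\theta\xi(\mathbf v_i)$ for every identity $\mathbf u_i\approx\mathbf v_i$ of $\Sigma$ and every $\xi\in\End(F^1)$; since every endomorphism of $F^1/\theta$ is induced by one of $F^1$, this forces each identity of $\Sigma$ to hold in $F^1/\theta$. Because $F^1/\theta$ is the relatively free monoid of this variety on a countably infinite set of free generators, an identity holds throughout the variety if and only if it holds in $F^1/\theta$. Hence, if $\mathbf u\approx\mathbf v$ holds in the variety, then evaluating at the generators gives $[\mathbf u]_\theta=[\mathbf v]_\theta$, that is $\mathbf u\mathbin\theta\mathbf v$, which by the definition of $\theta$ produces exactly the required connecting sequence.

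The main obstacle I anticipate is the bookkeeping in the completeness direction, specifically confirming that $\theta$ is genuinely fully invariant and that the quotient satisfies all substitution instances of $\Sigma$ rather than merely the generating identities. The subtle point is that an endomorphism of the quotient must be lifted back to $\End(F^1)$ so that full invariance of $\theta$ can be invoked; once this lifting is in place, the identification of $F^1/\theta$ as the relatively free object, and hence the fact that validity in the variety is detected by $\theta$, becomes routine.
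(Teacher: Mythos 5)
Your argument is correct. The paper itself gives no proof of this lemma --- it is stated with an immediate \qed as ``the specialization for monoids of a well-known universal-algebraic fact'' (Birkhoff's completeness theorem, or the deduction theorem of equational logic) --- so there is nothing in the paper to compare against; your write-up is the standard proof of that fact. Both directions are sound: soundness by chaining substitution instances through the congruence property, and completeness by verifying that the reachability relation $\theta$ is a fully invariant congruence on $F^1$ containing all pairs from $\Sigma$, that $F^1/\theta$ therefore lies in the variety, and that evaluating the identity $\mathbf u\approx\mathbf v$ at the free generators forces $(\mathbf u,\mathbf v)\in\theta$. The one point worth stating explicitly rather than in passing is the lifting step: an arbitrary assignment of values in $F^1/\theta$ to the generators lifts (by choosing representatives) to an endomorphism $\xi\in\End(F^1)$ composed with the quotient map, which is exactly what lets you conclude that $F^1/\theta$ satisfies every substitution instance of $\Sigma$ and not merely the instances $\xi(\mathbf u_i)\mathbin\theta\xi(\mathbf v_i)$ you exhibited. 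You correctly flag this as the subtle point, and with it in place the proof is complete.
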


For a set $X$ of letters and a word $\bf w$, we denote by $\mathbf w_X$ the word that is obtained from $\bf w$ by deleting all letters from $X$. We denote by $\con(\mathbf w)$ the \emph{content} of the word \textbf w, i.e., the set of all letters occurring in $\mathbf w$. The number of occurrences of a letter $x$ in a word $\mathbf w$ is denoted by $\occ_x(\mathbf w)$. Further, we denote by $\mathcal L_{\FIC}(F^1)$ the lattice of all fully invariant congruences on $F^1$ and by $\FIC(\mathbf V)$ the fully invariant congruence on $F^1$ corresponding to the monoid variety $\mathbf V$. It is a general knowledge that the map $\FIC\colon\mathbf{MON}\longrightarrow\mathcal L_{\FIC}(F^1)$ is a lattice anti-isomorphism. For any $\mathbf u,\mathbf v\in F$, we define $\mathbf u\le \mathbf v$ if and only if $\mathbf v=\mathbf a\xi(\mathbf u)\mathbf b$ for some $\mathbf a,\mathbf b\in F^1$ and some $\xi\in\End(F)$. The relation $\le$ is a quasiorder on $F$. For an arbitrary anti-chain $A\subseteq F$, we consider the set $L_A$ of all monoid varieties $\mathbf V$ with the propery that $A$ is a union of $\FIC(\mathbf V)$-classes. We define the map $\varphi_A\colon L_A\longrightarrow\Part(A)$ by the rule $\varphi_A(\mathbf V)=\FIC(\mathbf V)|_A$ for each $\mathbf V\in L_A$.

The following lemma plays the key role in the proof of the main result. Note that the proof of this lemma is quite analogous to the proof of~\cite[Lemma 3]{Shaprynskii-12}.

\begin{lemma}
\label{anti-hom}
Let $A$ be an anti-chain in $F$ and, for arbitrary two words $\mathbf u,\mathbf v\in A$ and any non-empty set $X\subseteq \con(\mathbf u)$, the equalities $\con(\mathbf u)=\con(\mathbf v)$ and $\mathbf u_X = \mathbf v_X$ hold. Then:
\begin{itemize}
\item[\textup{(i)}] the set $L_A$ is a sublattice of the lattice $\mathbf{MON}$;
\item[\textup{(ii)}] the map $\varphi_A$ is a surjective anti-homomorpism of the lattice $L_A$ onto the lattice $\Part(A)$;
\item[\textup{(iii)}] for any partition $\beta\in\Part(A)$ there is an overcommutative monoid variety $\mathbf V\in L_A$ with $\varphi_A(\mathbf V)=\beta$.
\end{itemize}
\end{lemma}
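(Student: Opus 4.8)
The plan is to pass through the anti-isomorphism $\FIC\colon\mathbf{MON}\to\mathcal L_{\FIC}(F^1)$, so that every assertion about $L_A$ and $\varphi_A$ becomes one about fully invariant congruences and their restrictions to $A$. Under $\FIC$ the join $\mathbf V\vee\mathbf W$ corresponds to the intersection $\FIC(\mathbf V)\cap\FIC(\mathbf W)$, the meet $\mathbf V\wedge\mathbf W$ to the congruence join $\FIC(\mathbf V)\vee\FIC(\mathbf W)$, and $\varphi_A(\mathbf V)=\FIC(\mathbf V)|_A$. The first thing I would record is a consequence of the hypothesis used throughout: taking $X=\con(\mathbf u)\setminus\{x\}$ in the equality $\mathbf u_X=\mathbf v_X$ yields $\occ_x(\mathbf u)=\occ_x(\mathbf v)$ for every letter $x$, so all words of $A$ share one content $C$ and one vector of occurrence numbers.

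For (i) and the anti-homomorphism in (ii) the join direction is immediate: since intersection of equivalence relations commutes with restriction, $\varphi_A(\mathbf V\vee\mathbf W)=(\FIC(\mathbf V)\cap\FIC(\mathbf W))|_A=\varphi_A(\mathbf V)\wedge\varphi_A(\mathbf W)$, and passing to finer classes shows $\mathbf V\vee\mathbf W\in L_A$. The meet direction is the one needing work. Here I would invoke Lemma~\ref{deduction}: if $\mathbf u,\mathbf v\in A$ and $\mathbf u\mathrel{(\FIC(\mathbf V)\vee\FIC(\mathbf W))}\mathbf v$, there is a deduction sequence $\mathbf u=\mathbf w_0,\dots,\mathbf w_n=\mathbf v$ each step of which places consecutive words into $\FIC(\mathbf V)$ or $\FIC(\mathbf W)$ (the endomorphism and the two flanking factors are absorbed by full invariance and the congruence property). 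Since $\mathbf V,\mathbf W\in L_A$, the set $A$ is a union of $\FIC(\mathbf V)$- and of $\FIC(\mathbf W)$-classes, so a straightforward induction keeps every $\mathbf w_i$ inside $A$; this simultaneously yields $\mathbf V\wedge\mathbf W\in L_A$ and the equality $(\FIC(\mathbf V)\vee\FIC(\mathbf W))|_A=\FIC(\mathbf V)|_A\vee\FIC(\mathbf W)|_A$, i.e. $\varphi_A(\mathbf V\wedge\mathbf W)=\varphi_A(\mathbf V)\vee\varphi_A(\mathbf W)$.

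For the surjectivity in~(ii) and for~(iii) I would, given $\beta\in\Part(A)$, take the variety $\mathbf V_\beta$ defined by the identity system $\Sigma_\beta=\{\mathbf p\approx\mathbf q : \mathbf p,\mathbf q\in A,\ (\mathbf p,\mathbf q)\in\beta\}$. Because every identity of $\Sigma_\beta$ is balanced (equal occurrence numbers, by the first observation) and balancedness is preserved under the operations of Lemma~\ref{deduction}, every identity of $\mathbf V_\beta$ is balanced; hence $\FIC(\mathbf V_\beta)$ lies inside the congruence of the variety of all commutative monoids, so $\mathbf V_\beta$ is overcommutative. It then remains to prove $\mathbf V_\beta\in L_A$ and $\varphi_A(\mathbf V_\beta)=\beta$, and both follow once I show that a single deduction step $\mathbf u=\mathbf a\,\xi(\mathbf p)\,\mathbf b\to\mathbf a\,\xi(\mathbf q)\,\mathbf b=\mathbf t$ with $\mathbf u\in A$ and $(\mathbf p,\mathbf q)\in\beta$ either fixes the word or replaces it by a $\beta$-equivalent word of $A$; feeding this into an induction along a deduction sequence keeps the $\FIC(\mathbf V_\beta)$-class of any $\mathbf u\in A$ inside $A$ and inside its own $\beta$-class.

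This last step is the heart of the matter, and it is where both hypotheses on $A$ are spent, so I expect it to be the main obstacle. I would split it according to the behaviour of $\xi$ on $C=\con(\mathbf p)$. If $\xi$ sends no letter of $C$ to $\lambda$, then, extending $\xi|_C$ to a semigroup endomorphism, one reads off $\mathbf p\le\mathbf u$; the anti-chain property forces $\mathbf p=\mathbf u$, a length count then forces $\mathbf a=\mathbf b=\lambda$ with $\xi$ letter-to-letter, and $\xi(\mathbf p)=\mathbf p$ forces $\xi|_C=\mathrm{id}$, so the step is the bare replacement $\mathbf p\to\mathbf q$ within $A$. If instead $\xi$ erases the non-empty set $X=\{z\in C:\xi(z)=\lambda\}\subseteq\con(\mathbf p)$, then $\xi(\mathbf p)=\xi(\mathbf p_X)$ and $\xi(\mathbf q)=\xi(\mathbf q_X)$, and the hypothesis $\mathbf p_X=\mathbf q_X$ gives $\xi(\mathbf p)=\xi(\mathbf q)$, whence $\mathbf t=\mathbf u$. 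The erasing case is the delicate one, and it is precisely what the deletion condition $\mathbf u_X=\mathbf v_X$ is designed to dispose of; together the two cases finish (ii) and~(iii).
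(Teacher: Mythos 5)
Your proof is correct, and its substantive core --- the construction of $\mathbf V_\beta$ from the identity system $\{\mathbf p\approx\mathbf q:(\mathbf p,\mathbf q)\in\beta\}$, the reduction via Lemma~\ref{deduction} to a single deduction step, the case split on whether $\xi$ erases a letter of $\con(\mathbf p)$ (with the deletion hypothesis $\mathbf p_X=\mathbf q_X$ killing the erasing case and the anti-chain property pinning down the non-erasing case), and the balancedness argument for overcommutativity --- coincides with the paper's proof of parts (ii)--(iii). The one place you genuinely diverge is the lattice-theoretic bookkeeping in part (i) and the anti-homomorphism property in part (ii): the paper identifies $L_A$ as the $\FIC$-preimage of $(\alpha]_{\Part(F^1)}\cap\mathcal L_{\FIC}(F^1)$, where $\alpha$ is the partition of $F^1$ into $A$ and $F^1\setminus A$, and then invokes Lemma~\ref{ideals} to decompose $(\alpha]_{\Part(F^1)}$ as $\Part(A)\times\Part(F^1\setminus A)$, so that $\beta\mapsto\beta|_A$ is a projection and hence a homomorphism. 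You instead verify directly that restriction to $A$ commutes with intersection of congruences (trivially) and with join of congruences (via the observation that a connecting chain between two elements of $A$ stays inside $A$, since $A$ is a union of classes of each congruence). Your route is more elementary --- it avoids the cited decomposition of principal ideals of partition lattices entirely --- at the cost of having to argue the join case by hand; the paper's route gets both lattice operations at once from one structural fact. Both are sound, and notably neither version of this part uses the hypotheses on $A$, which, as you correctly observe, are spent entirely on the single-step analysis.
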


\begin{proof}
(i) Let $\alpha$ be the partition of the monoid $F^1$ on two classes: $A$ and $F^1\setminus A$. Its principal ideal $(\alpha]_{\Part(F^1)}$ consists of all partitions $\beta$ such that $A$ is a union of $\beta$-classes. This observation and the definition of the set $L_A$ imply that $L_A$ is the pre-image of the sublattice $(\alpha]_{\Part(F^1)}\cap\mathcal L_{\FIC}(F^1)$ of the lattice $\mathcal L_{\FIC}(F^1)$ under the anti-homomorphism $\FIC$. Therefore, $L_A$ is a sublattice of the lattice $\mathbf{MON}$.

\smallskip

(ii) We define the map $\psi_A\colon(\alpha]_{\Part(F^1)}\longrightarrow\Part(A)$ by the rule $\psi_A(\beta)=\beta|_A$ for any $\beta\in(\alpha]_{\Part(F^1)}$. By Lemma \ref{ideals}, the lattice $(\alpha]_{\Part(F^1)}$ is decomposable into the direct product of the lattices $\Part(A)$ and $\Part(F^1\setminus A)$, and the map $\psi_A$ is the projection on the first factor. Hence $\psi_A$ is a lattice homomorphism. The map $\varphi_A$ is a composition of restriction of the anti-isomorphism $\FIC$ to $L_A$, and the restriction of the homomorphism $\psi_A$ to $(\alpha]_{\Part(F^1)}\cap\mathcal L_{\FIC}(F^1)$. Therefore, $\varphi_A$ is an anti-homomorphism. It remains to verify that $\varphi_A$ is surjective. Let $\beta\in\Part(A)$ and let $\mathbf V$ be the variety given by all identities of the form $\mathbf{u\approx v}$ with $(\mathbf u,\mathbf v)\in\beta$. We need to show check that $\mathbf V\in L_A$ and $\varphi_A(\mathbf V)=\beta$. This is true whenever every $\beta$-class is a $\FIC(\mathbf V)$-class. Thus, we need to verify that if an identity $\mathbf u\approx \mathbf v$ holds in $\mathbf V$ and $\mathbf u\in A$ then $(\mathbf u,\mathbf v)\in\beta$. By Lemma \ref{deduction} and induction, we can reduce our considerations to the case when $\mathbf u= \mathbf a\xi(\mathbf u')\mathbf b$ and $\mathbf v= \mathbf a\xi(\mathbf v')\mathbf b$ for some $\mathbf a,\mathbf b\in F^1$, $\xi\in\End(F^1)$ and a pair of words $(\mathbf u', \mathbf v')\in\beta$. Suppose that $\xi(x)=\lambda$ for some letter $x\in \con(\mathbf u')$. Then $\xi(\mathbf u')=\xi(\mathbf v')$ because $\mathbf u_X' = \mathbf v_X'$ for any non-empty set $X\subseteq \con(\mathbf u')$. Then $\mathbf u=\mathbf v$, whence $(\mathbf u,\mathbf v)\in\beta$. Suppose now that $\xi$ maps all letters from $\con(\mathbf u')$ into non-empty words. Then there is an endomorphism $\zeta\in \End(F)$ with $\xi|_{\con(\mathbf u')}=\zeta|_{\con(\mathbf u')}$. Then $\mathbf u= \mathbf a\zeta(\mathbf u')\mathbf b$. This means that $\mathbf u'\le \mathbf u$. Since $\mathbf u$ and $\mathbf u'$ lie in the anti-chain $A$, we have $\mathbf u=\mathbf u'$. Then $\mathbf a=\mathbf b=\lambda$ and $\xi(x)=x$ for any letter $x$ from the words $\mathbf u'$ and $\mathbf v'$. Therefore, $\mathbf v= \mathbf v'$. Thus, $\mathbf u=\mathbf u'\,\beta\,\mathbf v'=\mathbf v$, whence $(\mathbf u, \mathbf v)\in\beta$.

\smallskip

(iii) It follows from the proof of the claim (ii) that if $\beta\in\Part(A)$ and $\mathbf V$ is the monoid variety given by all identities of the form $\mathbf{u\approx v}$ with $(\mathbf u,\mathbf v)\in\beta$ then $\mathbf V\in L_A$ and $\varphi_A(\mathbf V)=\beta$. To prove the claim (iii), it remains to check that the variety $\bf V$ is overcommutative. Let $(\mathbf u,\mathbf v)\in \beta$ and $x\in\con(\mathbf u)$. Put $X=\con(\mathbf u)\setminus \{x\}$. By the hypothesis, $\mathbf u_X = \mathbf v_X$, whence $\occ_x(\mathbf u)=\occ_x(\mathbf v)$. Then the equality $\con(\mathbf u)=\con(\mathbf v)$ implies that $\occ_x(\mathbf u)=\occ_x(\mathbf v)$ for any letter $x$. It is well known that this implies the desired conclusion.
\end{proof}

Now we start with the direct proof of the main result. We denote by $\ell(\mathbf w)$ the length of the word \textbf w. Let $n$ be a natural number. We are going to check that the set of words  
$$
A_n = \{x^{n-i}yx^i \mid 0\le i \le n\}
$$ 
is an anti-chain. Suppose that this is not the case. Then there are different $i,j$ such that $x^{n-i}yx^i = \mathbf a\xi(x^{n-j}yx^j)\mathbf b$ for some $\mathbf a,\mathbf b \in F^1$ and some $\xi\in\End(F)$. Note that 
$$
\ell(x^{n-i}yx^i)=\ell(x^{n-j}yx^j)=n+1\le\ell(\xi(x^{n-j}yx^j)),
$$ 
whence $\mathbf a=\mathbf b =\lambda$. We see that the endomorphism $\xi$ maps letters to letters. It is easy to see that in this case $x^{n-i}yx^i\ne\xi(x^{n-j}yx^j)$. Thus, we have proved that the set of words $A_n$ is an anti-chain. Also, it is evident that $\mathbf u_{\{x\}}=\mathbf v_{\{x\}}=x^n$, $\mathbf u_{\{y\}}=\mathbf v_{\{y\}}=y$ and $\mathbf u_{\{x,y\}}=\mathbf v_{\{x,y\}}=\lambda$ for any $\mathbf u, \mathbf v \in A_n$. Now Lemma \ref{anti-hom}(i),(ii) implies that the map $\varphi_{A_n}$ is an anti-homomorphism of the lattice $L_{A_n}$ onto the lattice $\Part(A_n)$. Lemma \ref{anti-hom}(iii) implies that the restriction of the homomorphism $\varphi_{A_n}$ to the lattice $L_{A_n}\cap\bf OC$ is an anti-homomorphism of this lattice onto $\Part(A_n)$.  

We have proved that the partition lattice over an arbitrary finite set is an anti-homomorphic image of some sublattice of the lattice \textbf{OC}. Suppose that the last lattice satisfies some non-trivial identity $\varepsilon$. Then the identity dual to $\varepsilon$ holds in $\text{Part}(A_n)$ for any $n$. But it is well known that the class of all partition lattices of finite sets does not satisfy any non-trivial identity~\cite{Sachs-61}. This completes the proof of the main result.\qed

\subsection*{Acknowledgments.} The author is sincerely grateful to Professor Vernikov for his attention and assistance in the writing of the article, to Drs. Edmond W.H. Lee and Azeef Parayil for several valuable suggestions for improving the text, and to Dr. Shaprynski\v{\i} for helpful discussions.

\end{document}